\newtheorem{theorem}{Theorem}
\newtheorem{corollary}[theorem]{Corollary}
\newtheorem{lemma}[theorem]{Lemma}
\begin{document}

\title{The rational--transcendental dichotomy of Mahler functions}
\author{Jason P.~Bell}
\address{
	Department of Mathematics \\
	Simon Fraser University \\
	Burnaby, BC \\
	Canada
}
\author{Michael Coons}
\address{
	School of Mathematical and Physical Sciences \\
	The University of Newcastle \\
	Callaghan, NSW \\
	Australia
}
\author{Eric Rowland}
\address{
	LaCIM \\
	Universit\'e du Qu\'ebec \`a Montr\'eal \\
	Montr\'eal, QC \\
	Canada
}

\begin{abstract}
In this paper, we give a new proof of a result due to B\'ezivin that a $D$-finite Mahler function is necessarily rational. This also gives a new proof of the rational--transcendental dichotomy of Mahler functions due to Nishioka. Using our method of proof, we also provide a new proof of a P\'olya--Carlson type result for Mahler functions due to Rand\'e; that is, a Mahler function which is meromorphic in the unit disk is either rational or has the unit circle as a natural boundary.
\end{abstract}

\maketitle

%%%%%%%%%%%%%%%%%%%%%%%%%%%%%%%%%%%%%%%%%%%%%%%%%%%%%%%%%%%%%%%%%%%%%%%%%%%%%
\section{Introduction}
%%%%%%%%%%%%%%%%%%%%%%%%%%%%%%%%%%%%%%%%%%%%%%%%%%%%%%%%%%%%%%%%%%%%%%%%%%%%%

The {\em Thue--Morse sequence} $\{t(n)\}_{n\geq 0}$ over the alphabet $\{-1,1\}$ is given by $t(n)=(-1)^{s(n)}$ where $s(n)$ is the number of $1$s in the base $2$ expansion of the number $n$. Using this definition it is immediate that the sequence $\{t(n)\}_{n\geq 0}$ is $2$-automatic. That is, there is a deterministic finite automaton that takes the base $2$ expansion of $n$ as input and outputs the value $t(n)$ (see Figure \ref{Fig1}); the definition of an automatic sequence will be discussed in more detail below.

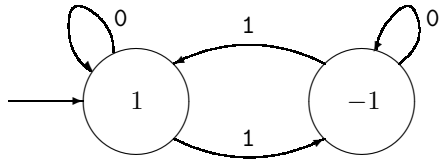
\begin{figure}[htbp]
\begin{center}
\setlength{\unitlength}{1mm}
\begin{picture}(60,20)
  \put(14.25,4){{$1$}}
  \put(15,5){\circle{20}}
  \put(43,4){{$-1$}}
  \put(45,5){\circle{20}}
  
  \put(-2,5){\vector(1,0){10}} 

  \qbezier(40, 10)(30, 15)(20, 10)
  \put(21, 10.5){\vector(-2,-1){1}}
    \put(29,14){{\tt 1}}
    
  \qbezier(40, 0)(30, -5)(20, 0)
  \put(39, -.5){\vector(2,1){1}}
    \put(29,-1){{\tt 1}}
      
  \qbezier(12, 11.5)(12, 15)(9, 17)
  \qbezier(9, 17)(6, 19)(6, 15)
  \qbezier(6, 15)(6, 11)(9, 9)
  \put(8.25, 10){\vector(1,-1){1}}
    \put(12,15){{\tt 0}}
  
  \qbezier(50, 10)(54, 13)(52, 17)
  \qbezier(52, 17)(51, 19)(48, 15)
  \qbezier(48, 15)(47, 13.3)(47, 12)
  \put(47.6, 13.6){\vector(-1,-2){1}}
    \put(53.5,15){{\tt 0}}
\end{picture}
\caption{The generating $2$-automaton of the Thue--Morse sequence.}
\label{Fig1}
\end{center}
\end{figure}

\noindent It is also immediate using the definition that the Thue--Morse sequence is the unique sequence given by $t(0)=1$, $t(2n)=t(n)$ and $t(2n+1)=-t(n)$. Thus, writing the generating power series for $t(n)$ as $T(z)=\sum_{n\geq 0}t(n)z^n$, we have that $T(z)$ satisfies the functional equation $$(1-z)T(z^2)=T(z).$$ The {\em Thue--Morse number} is the evaluation of the generating power series for $\{t(n)\}_{n\geq 0}$ at $z=1/2$; that is, the number $T(1/2)=\sum_{n\geq 0}t(n)2^{-n}.$ Due to the nature of the construction of the Thue--Morse sequence and the functional equation satisfied by its generating power series, the Thue--Morse number and those numbers with similar attributes are a natural class of numbers to consider from the perspectives of transcendental number theory and Diophantine approximation. 

Focusing on the functional equation aspect, in the late 1920s and early 1930s, Mahler \cite{M1929, M1930a, M1930b} showed that if $F(z)\in\mathbb{Q}[[z]]$ satisfies \begin{equation}\label{LF}a(z)F(z^k)=b(z)F(z)\end{equation} for some $a(z),b(z)\in\mathbb{Q}[z]$ and $F(z)$ is transcendental over $\mathbb{Q}(z)$, then for all but finitely many algebraic points $\alpha$ in radius of convergence of $F(z)$, $F(\alpha)$ is transcendental over $\mathbb{Q}$. One of the main goals of Mahler's above-cited work was to provide a proof of the transcendence of the Thue--Morse number, a goal which Mahler accomplished \cite{M1929}. Extensions of Mahler's work in this area have been, and are still, a vital area of research in number theory under the heading {\em Mahler's method}. In this paper, we are concerned with the following generalization of~\eqref{LF}. 

Let $k\geq 2$ be a positive integer. We say that a function $F(z)\in\mathbb{C}[[z]]$ is {\em $k$-Mahler} (or sometimes just {\em Mahler}) provided there exist a non-negative integer $d$ and polynomials $a_0(z),\ldots,a_d(z)\in\mathbb{C}[z]$ with $a_0(z)a_d(z)\neq 0$ such that \begin{equation}\label{F} a_0(z)F(z)+a_1(z)F(z^k)+\cdots+a_d(z)F(z^{k^d})=0.\end{equation} The functional equation in \eqref{F} is called a {\em Mahler-type functional equation}. Loxton and van der Poorten \cite{LP1988} claimed an analogue of Mahler's above-mentioned result for functions $F(z)\in\mathbb{Z}[[z]]$ satisfying \eqref{F}, although their proof is not complete \cite[remark following Corollary~2]{B1994}.

One may also wish to possibly avoid the functional equation approach and focus on generalizing the automatic aspect inspired by the Thue--Morse sequence. While the definition of automaticity based on computability by finite state automata is useful in a computational setting, we will use an equivalent and combinatorial (or sequential) definition which is more suitably generalized in a mathematical context. Let $\mathbf{f}:=\{f(n)\}_{n\geq 0}$ be a sequence with values in a ring $R$. We define the {\em $k$-kernel} of $\mathbf{f}$ to be the set $$\Big\{\{f(k^ln+r)\}_{n\geq 0}:l\geq 0 \mbox{ and } 0\leq r<k^l\Big\}.$$ Given $k\geq 2$, we say a sequence $\mathbf{f}$ is $k$-{\em automatic} if and only if the $k$-kernel of $\mathbf{f}$ is finite. Note that by the definition, an automatic sequence can only take on a finite number of values. To expand the notion of automatic sequences to sequences which can have values from an infinite set, we take the following definition from Allouche and Shallit \cite{AS1992}. We say that a sequence $\mathbf{f}$ taking values in a $\mathbb{Z}$-module $R$ is a {\em $k$-regular sequence} (or just $k$-regular) provided there exist a finite number of sequences over $R$, say $\{\mathbf{f}_1,\ldots,\mathbf{f}_m\}$, with the property that every sequence in the $k$-kernel of $\mathbf{f}$ is a $\mathbb{Z}$-linear combination of the $\mathbf{f}_i$; that is, $\mathbf{f}$ is $k$-regular provided the $k$-kernel of $\mathbf{f}$ is contained in a finitely generated $\mathbb{Z}$-module. We say that the series $\sum_{n\geq 0}f(n)z^n$ is {\em $k$-regular} (resp.~$k$-automatic) when $\mathbf{f}$ is $k$-regular (resp.~$k$-automatic). Note that with these definitions in mind, it is easy to see that any $k$-automatic function is also $k$-regular.

While the generalizations put forth in the preceding two paragraphs seem to branch into unrelated territories, they are in fact closely related. Indeed, in 1994 Becker \cite{B1994} showed that a $k$-regular series is $k$-Mahler; this was previously known for $k$-automatic series \cite{LP1988}. Thus to consider questions of transcendence of regular or automatic numbers, it may be enough to focus on Mahler's method. In the application of Mahler's method, as in the approach of Loxton and van der Poorten~\cite{LP1988}, one must have in hand a transcendence result for the series $F(z)$. Fortunately, Nishioka~\cite[Theorem 5.1.7]{N1996} established a rational--transcendental dichotomy for solutions of certain systems of functional equations; in particular, a $k$-Mahler function $F(z)\in\mathbb{C}[[z]]$ is either rational or transcendental.
Also in 1994, B\'ezivin~\cite[Th\'eor\`eme~1-3]{Bezivin} provided an extension of the special case of Nishioka's theorem to differentiably finite power series.
Recall that a function is said to be {\em differentiably finite} (or {\em $D$-finite}) if it satisfies a homogeneous linear differential equation with polynomial coefficients~\cite{S1980}.

\begin{theorem}[B\'ezivin \cite{Bezivin}]\label{Df} Let $k\geq 2$ be an integer and $F(z)\in\mathbb{C}[[z]]$ be a $k$-Mahler function. If $F(z)$ is $D$-finite, then $F(z)$ is a rational function.
\end{theorem}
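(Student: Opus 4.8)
The plan is to prove that a $D$-finite Mahler function must be rational, combining the functional equation structure with growth and singularity constraints imposed by $D$-finiteness. The key tension I would exploit is this: $D$-finite functions have very controlled singularities (finitely many, each of finite order, with moderate local growth), whereas the Mahler functional equation \eqref{F} forces the singularity set to be either empty of interesting structure (giving rationality) or self-similar under $z\mapsto z^k$ in a way incompatible with finiteness.

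First I would set up the two structural facts. On one side, suppose $F(z)$ is $D$-finite, satisfying a linear ODE $\sum_{i=0}^m p_i(z)F^{(i)}(z)=0$ with $p_i\in\mathbb{C}[z]$ and $p_m\not\equiv 0$. Then $F(z)$ is holomorphic on the open unit disk except possibly at finitely many points (the zeros of $p_m$), it can only have singularities at roots of $p_m(z)=0$, and at each such point the local behavior is of the form $(z-\zeta)^{\rho}$ times logarithmic and holomorphic factors; in particular $F$ has at most polynomial growth near the unit circle and extends meromorphically to a slit neighborhood. On the other side, the Mahler equation \eqref{F}, rewritten as
\begin{equation}\label{plan-rec}
F(z)=-\frac{1}{a_0(z)}\sum_{i=1}^d a_i(z)F(z^{k^i}),
\end{equation}
lets me pull back singularities: any singularity of $F$ at a point $\alpha$ with $0<|\alpha|<1$ propagates to a singularity at some $k$-th root $\beta$ with $\beta^{k}=\alpha$ (unless killed by a zero of $a_0$ or a pole structure in the $a_i$), and iterating produces points accumulating toward the unit circle. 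The crucial step is to argue that if $F$ has any genuine singularity inside the disk, this pullback mechanism generates infinitely many singular points inside the unit disk, contradicting the finiteness of the zero set of $p_m$.

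The key steps, in order, are as follows. I would first use the ODE to pin down that the only possible singularities of $F$ in $|z|<1$ lie in the finite set $S$ of roots of $p_m$. Second, I would analyze \eqref{plan-rec} to show that the singularity locus of $F$ is (essentially) invariant under the substitution $z\mapsto z^k$: a singular point $\alpha$ of $F$ forces singular behavior at its $k$-th roots, up to the finitely many exceptional points coming from zeros of $a_0,\dots,a_d$. Third, I would observe that if $\alpha\in S$ with $0<|\alpha|<1$ is a genuine singularity, then taking repeated $k$-th roots produces an infinite sequence of distinct singular points, all of modulus $|\alpha|^{1/k^j}\to 1^{-}$ but still strictly inside the disk; since $S$ is finite, only finitely many of these can be roots of $p_m$, the contradiction. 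This forces $F$ to be holomorphic throughout $0<|z|<1$, and a short argument at $z=0$ (where $F$ is a power series) and at $z=1$ (using the functional equation and $D$-finiteness to control growth) upgrades this to $F$ being holomorphic on the closed disk with at most polynomial growth, hence rational by a Pólya--Carlson/P\'olya type argument or directly because a $D$-finite function holomorphic on $\overline{\mathbb{D}}$ with no singularities forces the leading ODE coefficient structure to degenerate.

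The main obstacle I expect is the bookkeeping around the exceptional points: zeros of $a_0(z)$ and $a_d(z)$ can both create and cancel singularities under the pullback \eqref{plan-rec}, so the claim that a genuine singularity propagates to \emph{infinitely many} genuine singularities requires care to ensure the pullback does not eventually land only on the finite exceptional set or get annihilated by cancellation. Making the propagation argument rigorous—tracking not just the location but the \emph{order} or \emph{type} of singularity so that it cannot be spuriously removed—is the delicate heart of the proof. I suspect the cleanest route is to work with a suitable valuation or local monodromy invariant that is provably nonzero at $\alpha$ and shown to transfer to the $k$-th roots outside the finite exceptional set, so that finiteness of $S$ is directly contradicted.
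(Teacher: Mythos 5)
You have correctly identified the two ingredients in tension (D-finiteness gives finitely many singularities; the Mahler equation propagates singularities), but both halves of your argument have genuine gaps. The propagation step runs the implication in the direction that fails. From your rewritten equation $F(z)=-\frac{1}{a_0(z)}\sum_{i=1}^d a_i(z)F(z^{k^i})$, a singularity of the term $F(z^k)$ at a $k$-th root $\beta$ of $\alpha$ does \emph{not} force a singularity of the left-hand side: the coefficient can vanish there, or several singular terms can cancel. This is not bookkeeping that a ``valuation or local monodromy invariant'' will patch; the statement you want is simply false. For example, $F(z)=1/(1-2z)$ is rational (hence D-finite) and $k$-Mahler, satisfying $(1-2z)F(z)-(1-2z^k)F(z^k)=0$; it is singular at $z=1/2$ but at no $k$-th root of $1/2$, because the pole of $F(z^k)$ at $z=2^{-1/k}$ is cancelled by the zero of the coefficient $1-2z^k$. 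In particular the conclusion of your third step --- that $F$ is holomorphic throughout $0<|z|<1$ --- is false precisely for the rational functions you are trying to capture: Mahler functions genuinely can have poles inside the disk. The paper avoids this by (i) tracking only \emph{non-polar} singularities and (ii) isolating $F(z^{k^d})$ rather than $F(z)$: if $F(z^{k^d})$ has a non-polar singularity at a point, then some $F(z^{k^j})$ with $j<d$ must also have one there, since otherwise every term on the right-hand side (rational functions times at-most-meromorphic functions) would be at most polar. That contrapositive --- all terms tame implies the sum is tame --- is immune to cancellation, and combined with a minimal-modulus choice of non-polar singularity it yields Lemma \ref{merofinite}: a Mahler function with finitely many singularities is meromorphic, and otherwise it has infinitely many non-polar singularities on the unit circle.

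The endgame is also missing. P\'olya--Carlson concerns power series with \emph{integer} coefficients, which you do not have here ($F\in\mathbb{C}[[z]]$), and ``holomorphic on the closed disk with polynomial growth, hence rational'' is false ($e^z$ is a counterexample). More fundamentally, rationality cannot be decided by looking at the closed unit disk alone: you must control $F$ near infinity, and your outline never uses the Mahler equation in $|z|>1$. The paper's actual route is: D-finiteness gives finitely many singularities on all of $\mathbb{C}$; by Lemma \ref{merofinite} the function is then meromorphic on $\mathbb{C}$, so $q(z)F(z)$ is entire for some polynomial $q$ and is easily checked to be again Mahler; and finally an entire Mahler function is a polynomial, proved by iterating the functional equation outward to get the bound $\max_{|z|=L^{k^n}}|F(z)|\leq L^{bk^n}=\bigl(L^{k^n}\bigr)^b$ and applying Cauchy estimates on the circles $|z|=L^{k^n}\to\infty$. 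Some argument of this type at infinity is indispensable; without it, nothing in your proposal distinguishes a rational function from, say, an entire transcendental one. (A smaller inaccuracy: D-finite functions need not have moderate growth near their singularities --- $e^{1/(1-z)}$ is D-finite --- though this does not affect the main line of your argument.)
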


Rand\'e in his doctoral thesis \cite{R1992} proved a P\'olya--Carlson type theorem for Mahler functions.
Unfortunately, Rand\'e's result has not appeared in the literature.

\begin{theorem}[Rand\'e \cite{R1992}]\label{PC} Let $k\geq 2$ be an integer and $F(z)\in\mathbb{C}[[z]]$ be a $k$-Mahler function. Then $F(z)$ is a rational function or it has the unit circle as a natural boundary.
\end{theorem}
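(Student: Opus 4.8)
The plan is to track the set of singular points of $F$ in the punctured plane and to show that the functional equation \eqref{F} forces this set to be invariant under the $k$-power map $\sigma\colon z\mapsto z^{k}$, after which the expanding dynamics of $\sigma$ on the unit circle finish the argument. Throughout I assume $F$ is not rational (otherwise there is nothing to prove) and aim to show that $F$ is analytic in the open unit disk and that no point of the unit circle is a point of analyticity.

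First I would recast \eqref{F} as a first-order matrix equation. Writing $\mathbf{F}(z)=(F(z),F(z^{k}),\dots,F(z^{k^{d-1}}))^{\mathsf T}$ and solving for the top term $F(z^{k^{d}})$, one obtains a companion-type relation $\mathbf{F}(z^{k})=B(z)\,\mathbf{F}(z)$, where $B(z)$ has rational entries and $\det B(z)=\pm\,a_{0}(z)/a_{d}(z)$. The virtue of this form is that it relates the value of the vector at $z$ to its value at $z^{k}$ through a matrix that, together with its inverse, is holomorphic and invertible away from the finite zero set $Z$ of $a_{0}(z)a_{d}(z)$. Writing $\Sigma$ for the set of singular points of $\mathbf{F}$ in $\mathbb{C}^{\times}$, the relation and its inverse yield two propagation rules: if $w\in\Sigma$ and $a_{0}(w)\neq 0$ then $w^{k}\in\Sigma$, and if $w\in\Sigma$ then every $k$-th root $\eta$ of $w$ with $a_{d}(\eta)\neq 0$ lies in $\Sigma$. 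Thus $\Sigma$ is both forward and backward invariant under $\sigma$, modulo the finite exceptional set $Z$.

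Next I would locate the singularities. Forward invariance sends any singularity of modulus $r<1$ to singularities of moduli $r^{k},r^{k^{2}},\dots\to 0$, which accumulate at the origin and contradict the analyticity of $F$ at $0$; backward invariance sends any singularity of modulus $r>1$ to singularities of moduli $r^{1/k},r^{1/k^{2}},\dots$ lying strictly inside the disk of convergence, again a contradiction. Since the relation $\mathbf{F}(z^{k})=B(z)\mathbf{F}(z)$ forces an entire Mahler function to have polynomial growth, and hence to be a polynomial, our non-rational $F$ has at least one singularity in $\mathbb{C}^{\times}$; by the preceding, all of its singularities lie on the unit circle, so the radius of convergence is exactly $1$ and $\Sigma\cap\{|z|=1\}$ is nonempty and closed. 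On the unit circle $\sigma$ is the expanding map $\theta\mapsto k\theta$, for which the full backward orbit of any point is dense. A nonempty closed backward-invariant set therefore meets every arc and so equals the whole circle, and deleting the finitely many exceptional points of $Z$ and their finite orbits does not affect this density conclusion. Hence $\Sigma\supseteq\{|z|=1\}$, and transferring this from $\mathbf{F}$ to its first coordinate shows that $F$ extends across no boundary arc: the unit circle is a natural boundary.

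The main obstacle is the bookkeeping around the exceptional set $Z$ together with the transfer from $\mathbf{F}$ back to $F$. Because the propagation rules fail precisely at zeros of $a_{0}a_{d}$, one must verify that an iterated singularity cannot repeatedly ``hide'' in this finite set, and that the finitely many exceptional circle points never interrupt the backward-orbit density argument; this is where the estimates must be carried out carefully rather than invoked formally. A secondary difficulty is the final transfer step: since the coordinates of $\mathbf{F}$ are the functions $F(z^{k^{j}})$, one must exclude the possibility that $F$ is analytic on a boundary arc while $\mathbf{F}$ is singular there only on account of another coordinate, which reduces once more to establishing the $\sigma$-invariance of the singular set of $F$ itself.
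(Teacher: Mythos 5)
Your two propagation rules for $\mathbf{F}$ are correct, but both conclusions you draw from them are false, and for the same reason: the exceptional set $Z$ is not a harmless finite nuisance when you track \emph{all} singularities. First, a non-rational $k$-Mahler function need not be analytic in the open unit disk, and its radius of convergence need not be $1$: the function $F(z)=\prod_{j\geq 0}(1-2z^{k^j})^{-1}$ satisfies $(1-2z)F(z)-F(z^k)=0$, is not rational, and has poles at every $k^j$-th root of $1/2$, so its radius of convergence is $1/2$. Here the forward orbit of each pole terminates at $z=1/2$, which is exactly a zero of $a_0$; singularities really do ``hide'' permanently in $Z$, so your argument that a singularity of modulus $r<1$ propagates to the origin collapses for polar singularities. (The theorem is consistent with this: Mahler functions are meromorphic, not analytic, in the unit disk, and your parallel claim for $r>1$ is also flawed, since the iterated roots $r^{1/k^n}$ stay outside the unit circle and never enter the disk of convergence.) Second, your dynamical step is false as stated: a nonempty closed subset of the circle that is backward invariant except at finitely many points need not be the whole circle. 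Take $k=2$ and $a_d$ divisible by $z^2-1$; then $S=\{1\}$ satisfies your hypothesis vacuously, because both square roots of $1$ lie in $Z$. So ``deleting the finitely many exceptional points and their finite orbits'' can genuinely destroy the density conclusion; this is not bookkeeping that careful estimates can repair.

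Both defects disappear if one restricts attention to \emph{non-polar} singularities: a rational matrix can neither create nor destroy a non-polar singularity, so for those the propagation rules hold with no exceptional set at all, which is precisely what the paper's Lemma \ref{merofinite} exploits. But then two further ingredients are needed that your outline does not supply. (i) You must show a non-polar singularity exists at all, i.e., that a \emph{meromorphic} Mahler function is rational; your sketch only covers the entire case, whereas the paper combines Lemma \ref{merofinite} (meromorphic implies finitely many singularities) with Theorem \ref{main1} (clear the poles, reduce to the entire case). (ii) Your final transfer from $\mathbf{F}$ back to $F$ hits a real obstruction: the scalar equation only shows that if $F$ has a non-polar singularity at $w$, then for each $k^d$-th root $\eta$ of $w$, $F$ has one at $\eta^{k^{j}}$ for \emph{some} $j<d$ depending on $\eta$; the singular set of $F$ itself is not visibly invariant, and knowing that every circle point is a non-polar singularity of \emph{some} coordinate of $\mathbf{F}$ does not preclude $F$ being analytic on an arc (e.g.\ $T=S^1\setminus I$ with $I$ a short arc satisfying $\sigma(I)\cap I=\emptyset$ is consistent with $T\cup\sigma^{-1}(T)=S^1$). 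The paper closes exactly this gap with an algebraic device for which your proposal has no substitute: writing $q_{0,L}(z)F(z)=\sum_{i=1}^d q_{i,L}(z)F(z^{k^{i+L-1}})$, noting the right-hand side is invariant under $z\mapsto e^{2\pi i/k^L}z$, and running a dimension count in the $\mathbb{C}(z)$-vector space $W$ to rotate a hypothetical singularity-free arc onto a non-polar singularity. That rotation/linear-algebra step is the heart of the proof of Theorem \ref{PC}, and it is the piece missing here.
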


In this paper, we provide new proofs of the theorems of B\'ezivin and Rand\'e.
These theorems appear here in English for the first time.
We note that the main ingredient of our proofs of Theorems \ref{Df} and \ref{PC} is that a meromorphic $k$-Mahler function is necessarily rational.

%%%%%%%%%%%%%%%%%%%%%%%%%%%%%%%%%%%%%%%%%%%%%%%%%%%%%%%%%%%%%%%%%%%%%%%%%%%%%
\section{Rational--transcendental dichotomy}
%%%%%%%%%%%%%%%%%%%%%%%%%%%%%%%%%%%%%%%%%%%%%%%%%%%%%%%%%%%%%%%%%%%%%%%%%%%%%

%The following lemma follows easily from a result of  Allouche and Shallit \cite[Theorem 2.10]{AS1992}.

%\begin{lemma} Let $k\geq 2$ and let $F(z)\in\mathbb{C}[[z]]$ be a $k$-regular function. Then $F(z)$ is analytic in the unit disk. 
%\end{lemma}

We use the following result of Dumas \cite[Theorem 31]{D1993}.

\begin{theorem}[Structure Theorem of Dumas \cite{D1993}]\label{Dumas} A $k$-Mahler function is the quotient of a series and an infinite product which are $k$-regular and analytic in the unit disk. That is, if $F(z)$ is the solution of the Mahler functional equation $$a_0(z)F(z)+a_1(z)F(z^k)+\cdots+a_d(z)F(z^{k^d})=0,$$ where $a_0(z)a_d(z)\neq 0$, the $a_i(z)$ are polynomials, then there exists a $k$-regular series $H(z)$ such that $$F(z)=\frac{H(z)}{\prod_{j\geq 0}\Gamma(z^{k^j})},$$ where $a_0(z)=\rho z^{\delta_0}\Gamma(z)$, with $\rho\neq 0$ and $\Gamma(0)=1$.
\end{theorem}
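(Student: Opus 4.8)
The plan is to construct the numerator $H(z)$ and the denominator $\prod_{j\geq 0}\Gamma(z^{k^j})$ explicitly and to reduce the entire statement to one regularity lemma: \emph{a power series satisfying a Mahler-type equation whose leading coefficient is a monomial is $k$-regular}. First I would factor $a_0(z)=\rho z^{\delta_0}\Gamma(z)$ with $\rho\neq 0$ and $\Gamma(0)=1$, and set $P(z)=\prod_{j\geq 0}\Gamma(z^{k^j})$. Since $\Gamma(0)=1$, the polynomial $\Gamma(z^{k^j})-1$ has lowest-degree term of degree at least $k^j$, so $\sum_j(\Gamma(z^{k^j})-1)$ converges uniformly on compact subsets of the open unit disk; hence $P$ converges there to an analytic function with $P(0)=1$, and $1/P$ is analytic near the origin. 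By construction $P$ satisfies the first-order equation $P(z)-\Gamma(z)P(z^k)=0$, whose leading coefficient is the constant $1$.

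Next I would put $H(z)=F(z)P(z)$ and compute its functional equation. Using $P(z)/P(z^{k^i})=\prod_{j=0}^{i-1}\Gamma(z^{k^j})$, multiplying the Mahler equation for $F$ through by $P(z)$ and substituting $a_0(z)=\rho z^{\delta_0}\Gamma(z)$ makes every summand a polynomial multiple of $\Gamma(z)$. Cancelling the nonzero polynomial $\Gamma(z)$ yields
$$\rho z^{\delta_0}H(z)+\sum_{i=1}^{d} a_i(z)\Big(\prod_{j=1}^{i-1}\Gamma(z^{k^j})\Big)H(z^{k^i})=0,$$
so that $H$ satisfies a Mahler-type equation whose leading coefficient is the single monomial $\rho z^{\delta_0}$. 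Granting the regularity lemma, $H$ is $k$-regular; since $k$-regular power series have polynomially bounded coefficients, $H$ has radius of convergence at least $1$ and is analytic in the open unit disk, while $P$ is analytic there by construction. Then $F=H/P$ is the asserted factorization, and the lemma applied to $P(z)-\Gamma(z)P(z^k)=0$ shows the infinite product is $k$-regular as well.

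The hard part is the regularity lemma. I would argue through the section operators $\Lambda_r$ for $0\leq r<k$, defined by $\Lambda_r\sum_n c_nz^n=\sum_n c_{kn+r}z^n$, whose iterates $\Lambda_w$ realize exactly the sequences of the $k$-kernel; thus $k$-regularity is equivalent to $\{\Lambda_w H\}_w$ spanning a finite-dimensional $\mathbb{C}$-vector space. The identity $\Lambda_r\big(b(z)G(z^k)\big)=G(z)\,\Lambda_r(b)(z)$ for polynomial $b$ lets me commute a section past each factor $H(z^{k^i})$, lowering its level by one and leaving only sections of the fixed polynomial coefficients. Iterating this, and using the equation solved for its bottom term, namely $\rho z^{\delta_0}H(z)=-\sum_{i\geq 1}c_i(z)H(z^{k^i})$ with $c_i$ polynomials, confines every $\Lambda_w H$ to the finitely generated $\mathbb{C}(z)$-module spanned by $H(z),H(z^k),\dots,H(z^{k^{d-1}})$.

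The genuine obstacle is that the monomial leading coefficient is the only thing keeping denominators under control: solving for $H(z)$ introduces the factor $z^{-\delta_0}$, so the iterated sections live a priori in a rational, indeed Laurent, module rather than among power series. The crux is then to show that, because each $\Lambda_w H$ is a genuine power series whose only possible poles lie at the origin with uniformly bounded order, intersecting the finitely generated $\mathbb{C}(z)$-module with $\mathbb{C}[[z]]$ leaves a finite-dimensional $\mathbb{C}$-span; this is a Becker-type estimate tracking a bounded number of low-order coefficients. I expect the case $\delta_0=0$ (which covers $P$) to be routine and the case $\delta_0>0$ — where the vanishing of the leading coefficient at the origin forces the bookkeeping into negative exponents — to require the careful control described above.
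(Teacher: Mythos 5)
First, a point of comparison: the paper offers no proof of this theorem at all --- it is imported as an external result (Theorem 31 of Dumas' thesis) --- so your proposal can only be judged on its own merits. The reduction you perform is correct and is the standard one: factoring $a_0(z)=\rho z^{\delta_0}\Gamma(z)$, checking that $P(z)=\prod_{j\geq 0}\Gamma(z^{k^j})$ converges in the unit disk, deriving that $H=FP$ satisfies $\rho z^{\delta_0}H(z)+\sum_{i=1}^{d}a_i(z)\bigl(\prod_{j=1}^{i-1}\Gamma(z^{k^j})\bigr)H(z^{k^i})=0$ after cancelling $\Gamma(z)$, and noting that $P$ itself satisfies $P(z)-\Gamma(z)P(z^k)=0$, so both halves of the statement reduce to your regularity lemma. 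The section-operator framework and the identity $\Lambda_r\bigl(b(z)G(z^k)\bigr)=\Lambda_r(b)(z)G(z)$ are also the right tools, and the case $\delta_0=0$ (Becker's theorem) is indeed routine with them.

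The genuine gap is that the lemma in the case $\delta_0>0$ --- which is the entire content of the theorem --- is not proved, and the mechanism you propose for it fails as stated. The claim that intersecting a finitely generated $\mathbb{C}(z)$-module with $\mathbb{C}[[z]]$ leaves a finite-dimensional $\mathbb{C}$-span is false: $\mathbb{C}(z)\cap\mathbb{C}[[z]]$ contains all of $\mathbb{C}[z]$. Worse, confinement of the iterated sections to $\sum_i\mathbb{C}(z)H(z^{k^i})$ cannot be the decisive point, because it holds for \emph{every} $k$-Mahler function: sections of rational functions are rational, so for instance the non-regular Mahler function $1/(1-2z)$ (coefficients $2^n$, exponential growth) has all of its sections inside the one-dimensional module $\mathbb{C}(z)\cdot 1$. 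What actually closes the argument is quantitative control of both the degree and the pole order of the coefficients, with no intersection argument at all. Let $W$ be the $\mathbb{C}$-span of the products $b(z)H(z^{k^i})$, $1\leq i\leq d$, where $b$ runs over Laurent polynomials with $\operatorname{val}(b)\geq -V$ and $\deg b\leq N$; this space is finite dimensional by fiat. For $i\geq 2$, the operator $\Lambda_r$ sends $b(z)H(z^{k^i})$ to $\Lambda_r(b)(z)H(z^{k^{i-1}})$, dividing degree and valuation by $k$ (up to a bounded error); for $i=1$ one gets $\Lambda_r(b)(z)H(z)$ and substitutes $H(z)=-\rho^{-1}z^{-\delta_0}\sum_{i=1}^{d}c_i(z)H(z^{k^i})$, which costs only an additive loss of $\delta_0$ in valuation and $\max_i\deg c_i$ in degree. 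Since division by $k$ beats a bounded additive loss, choosing $V\geq 1+k\delta_0/(k-1)$ and $N\geq k\bigl(\max_i \deg c_i\bigr)/(k-1)$ makes $W$ stable under all $\Lambda_r$ with $H\in W$, so the $k$-kernel of $H$ spans a finite-dimensional space and $H$ is $k$-regular. With this bounded-valuation, bounded-degree Laurent module in place of your rational-function module and intersection step, your outline becomes a complete proof.
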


Dumas' theorem yields the following immediate corollary, which we note here as a lemma. We denote the open ball of radius $r>0$ centered at the origin by $B(0,r)$.

\begin{lemma} Let $k\geq 2$ be an integer and let $F(z)\in\mathbb{C}[[z]]$ be a $k$-Mahler function. Then $F(z)$ has a positive radius of convergence. 
\end{lemma}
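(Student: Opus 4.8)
The plan is to read the conclusion off directly from the Structure Theorem of Dumas (Theorem~\ref{Dumas}). Applying that theorem, I write
$$F(z)=\frac{H(z)}{\prod_{j\geq 0}\Gamma(z^{k^j})},$$
where $H(z)$ is a $k$-regular series analytic in the unit disk, the infinite product is analytic in the unit disk, and $\Gamma(0)=1$. Both numerator and denominator are thus analytic on $B(0,1)$, so it suffices to show that the denominator is nonzero throughout some ball about the origin.

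For the denominator, the key observation is that each factor $\Gamma(z^{k^j})$ takes the value $\Gamma(0)=1$ at $z=0$, so the product equals $1$ at the origin; in particular it is nonzero there. Since an analytic function is continuous, there is some radius $r\in(0,1)$ such that $\prod_{j\geq 0}\Gamma(z^{k^j})$ is nonzero on all of $B(0,r)$. Consequently $F(z)$, being the quotient of two functions analytic on $B(0,r)$ whose denominator never vanishes there, is itself analytic on $B(0,r)$. An analytic function on $B(0,r)$ coincides with its convergent Taylor expansion at the origin, and since $F$ is already given as a formal power series, this forces its radius of convergence to be at least $r>0$, which is the claim.

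There is essentially no obstacle here: once Theorem~\ref{Dumas} is granted, the only thing to verify is the nonvanishing of the denominator at the origin, and this is immediate from the normalization $\Gamma(0)=1$. All of the genuine difficulty is packed into Dumas' theorem itself, in particular into the assertions that the $k$-regular numerator $H(z)$ and the infinite product $\prod_{j\geq 0}\Gamma(z^{k^j})$ both converge and are analytic on the open unit disk. A self-contained alternative---attempting to bound the coefficients of $F(z)$ directly from the functional equation \eqref{F}---would instead confront the difficulty that $a_0(z)$ may vanish at $z=0$, so that one cannot simply solve the recursion for the $n$th coefficient in terms of earlier ones. It is precisely this vanishing that the factorization $a_0(z)=\rho z^{\delta_0}\Gamma(z)$ in Dumas' theorem is designed to absorb, which is why routing the argument through Theorem~\ref{Dumas} is the cleanest path.
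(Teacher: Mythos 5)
Your proof is correct and follows essentially the same route as the paper: both apply Dumas' structure theorem, use the analyticity of the $k$-regular numerator and of the infinite product in the unit disk, and conclude analyticity of $F$ near the origin from the nonvanishing of the denominator there (the paper just makes the radius explicit, taking $r$ to be the minimal distance from $0$ to a nonzero root of $a_0(z)(z-1)$, whereas you invoke continuity of the denominator at $0$ where it equals $1$). Nothing further is needed.
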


\begin{proof} Let $k\geq 2$ be an integer and $F(z)\in\mathbb{C}[[z]]$ be a $k$-Mahler function satisfying, say, $$\sum_{j=0}^{d} a_j(z)F(z^{k^j})=0,$$ for $a_j(z)\in\mathbb{C}[z]$ with $a_0(z)a_d(z)\neq 0$. Noting that a $k$-regular series is analytic in the unit disk (see Allouche and Shallit \cite[Theorem 2.10]{AS1992}), Theorem \ref{Dumas} gives that $F(z)$ converges in $B(0,r)$, where $r\in(0,1)$ is the minimal distance from $0$ to a nonzero root of $a_0(z)(z-1)$.
\end{proof}

\begin{lemma}\label{merofinite} Let $k\geq 2$ be an integer and let $F(z)\in\mathbb{C}[[z]]$ be a $k$-Mahler function. The function $F(z)$ is meromorphic if and only if it has finitely many singularities. Moreover, if $F(z)$ is not meromorphic then it has infinitely many non-polar singularities on the unit circle.
\end{lemma}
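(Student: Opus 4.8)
The plan is to study the singular set of $F$ through the functional equation, using Theorem~\ref{Dumas} to pin down the interior of the disk. First I would record two structural facts. By Dumas' theorem, writing $F=H/D$ with $H$ a $k$-regular series and $D(z)=\prod_{j\ge 0}\Gamma(z^{k^j})$, both $H$ and $D$ are analytic in $B(0,1)$, so $F$ is meromorphic there; its interior singularities are poles, located exactly on the backward orbits $\{z:z^{k^j}=\beta\}$ of the roots $\beta$ of $\Gamma$ with $0<|\beta|<1$ that survive cancellation by the zeros of $H$. Second, rewriting the Mahler equation as
$$F(z)=-\frac{1}{a_0(z)}\bigl(a_1(z)F(z^k)+\cdots+a_d(z)F(z^{k^d})\bigr)$$
yields a propagation principle: if $F$ is singular at some $\zeta\ne 0$, then $F$ is singular at each $k$-th root $w$ of $\zeta$ with $a_0(w)\ne 0$, unless the singular contribution of $a_1(w)F(w^k)$ is annihilated by the remaining terms. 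The crucial asymmetry I would exploit is that a polynomial coefficient has only finite-order zeros, so it can cancel a pole but can \emph{never} cancel a non-polar singularity (essential, branch, or non-isolated); hence non-polar singularities propagate to $k$-th roots with no exceptions beyond the finitely many zeros of $a_0$.

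Next I would use that the backward $k$-th-root orbit of any $\zeta\ne 0$ has moduli $|\zeta|^{1/k^n}\to 1$ and arguments $(\arg\zeta+2\pi m)/k^n$ spaced $2\pi/k^n$ apart, so the orbit accumulates at every point of the unit circle. This immediately gives the ``moreover'' statement: if $F$ is not meromorphic it has a non-polar singularity at some $\zeta\ne 0$ (either a genuine essential/branch point or an accumulation point of poles), and by the cancellation-free propagation its orbit produces infinitely many non-polar singularities dense on $\{|z|=1\}$. The equivalence then follows from the same mechanism. If $F$ has only finitely many singularities, it can have no non-polar singularity, since one would generate an infinite family on the circle; thus every singularity is a pole and $F$ is meromorphic. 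Conversely, if $F$ is meromorphic it has no non-polar singularities, so no cascade of propagated poles may accumulate on the circle; each pole-cascade must therefore be cut off after finitely many levels by cancellation, leaving finitely many singularities in a neighborhood of the closed disk, whence (by continuing through the functional equation) meromorphy.

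The hard part will be controlling precisely this cancellation, already visible in the rational example $F=1/(1-2z)$, which is $2$-Mahler with $a_1(z)=-(1-2z^2)$: the would-be poles at $z=\pm 1/\sqrt{2}$ created by propagation are killed exactly by the zeros of $a_1$, terminating the cascade after one step and leaving a single pole. The core of the argument is thus to show that cancellation by the $a_j$ and by the numerator $H$ can remove only finitely many singularities before the propagation outruns it: a non-meromorphic $F$ must retain, after all cancellations, an infinite surviving family equidistributing on the unit circle. For non-polar singularities this is automatic, as they cannot be cancelled at all; the delicate case is a cascade of genuine poles that accumulates on the circle without ever being isolated there, which I would handle by tracking the orders of the zeros of the $a_j$ (equivalently, of $\Gamma$ and its preimages under $z\mapsto z^{k^j}$) against the orders of the propagated poles level by level, showing that the finite total ``cancellation budget'' supplied by the polynomials cannot keep pace with the $k^n$ preimages arising at level $n$.
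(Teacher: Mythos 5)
There is a genuine gap, and it sits at the heart of your ``propagation principle.'' You solve the Mahler equation for the \emph{bottom} term, $F(z)=-a_0(z)^{-1}\sum_{j=1}^{d}a_j(z)F(z^{k^j})$, and propagate a singularity at $\zeta$ to its $k$-th roots $w$. But knowing that \emph{one} term on the right, $a_1(z)F(z^k)$, is singular at $w$ does not force the left-hand side to be singular there: the remaining terms $a_j(z)F(z^{k^j})$, $j\geq 2$, are $F$ evaluated near the forward iterates $\zeta^{k^{j-1}}$, and when $F$ is not meromorphic you cannot exclude that $F$ is singular at those points too, so their non-polar singular parts could cancel the one coming from $a_1(z)F(z^k)$. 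Your justification --- that a polynomial coefficient has only finite-order zeros and hence cannot kill a non-polar singularity --- rules out cancellation \emph{by the coefficient multiplying a single term}, but says nothing about mutual cancellation \emph{among several simultaneously singular terms}, which is the actual threat. The paper sidesteps this entirely by solving for the \emph{top} term $F(z^{k^d})$, as in \eqref{F1}: there the implication runs ``the left-hand side has a non-polar singularity at $w$, and a finite sum of terms each with at most polar singularities has at most a polar singularity, so some $F(z^{k^{j_0}})$, $j_0<d$, has a non-polar singularity at $w$,'' which is logically safe and moves a singularity of $F$ at $\alpha$ to one at $\alpha^{k^{j_0-d}}$, strictly closer to the unit circle; iterating gives the infinite sequence accumulating at $1$. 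Note also that your claimed conclusion --- that non-polar singularities propagate to \emph{all} $k^n$-th roots and are therefore dense on the circle --- would prove Theorem~\ref{PC} on the spot; the paper needs the entire vector-space and roots-of-unity argument in its last section precisely because this cancellation cannot be dismissed so cheaply.

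Separately, the direction ``meromorphic $\Rightarrow$ finitely many singularities'' is not actually proved by your sketch. A meromorphic function may well have infinitely many poles (they merely cannot accumulate at a finite point), so arguing that pole-cascades are ``cut off after finitely many levels'' near the closed unit disk does not yield the statement; the issue is poles of arbitrarily large modulus. The paper handles this with a quantitative pull-back: fixing $L>1$ exceeding the moduli of all zeros of the $a_j$, a pole $\alpha$ with $L^{k^{Md}}\leq|\alpha|<L^{k^{(M+1)d}}$ is dragged by the (safe, top-term) propagation into the fixed annulus $L\leq|z|\leq L^{k^{2d}}$ with argument in $\bigl(0,2\pi/k^{(M-1)d}\bigr]$; since infinitely many $M$ occur, one gets infinitely many distinct poles in a compact set, contradicting meromorphy. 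Your ``cancellation budget'' accounting --- zero orders of the $a_j$ versus the $k^n$ preimages at level $n$ --- is again aimed at the wrong cancellation (coefficients versus inter-term), and in any case is a plan rather than an argument.
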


\begin{proof} Let $k\geq 2$ be an integer and $F(z)\in\mathbb{C}[[z]]$ be a $k$-Mahler function satisfying, say, $$\sum_{j=0}^{d} a_j(z)F(z^{k^j})=0,$$ for $a_j(z)\in\mathbb{C}[z]$ with $a_0(z)a_d(z)\neq 0$. Write \begin{equation}\label{F1} F(z^{k^d})=-\sum_{j=0}^{d-1} \frac{a_j(z)}{a_d(z)}F(z^{k^j}).\end{equation} 

Suppose that $F(z)$ is not meromorphic on the plane, and let $\alpha=Re^{i\vartheta}$ be a non-polar singularity of $F(z)$ with $R\geq 1$ minimal and $\vartheta\in[0,2\pi);$ note that such a minimal singularity exists since the singularity set is closed and $F(z)$ has only polar singularities in the unit circle (see Theorem \ref{Dumas}).%We will consider two cases $R>1$ and $0<R\leq 1$; note that we can take $R>0$ since by the previous lemma $F(z)$ has a positive radius of convergence.

We note here that the case $R>1$ cannot occur. To see this, suppose that $z=Re^{i\vartheta_0}$ is a non-polar singularity of $F(z)$ of minimal distance $R>1$ to the origin and $\vartheta_0\in[0,2\pi)$. Then $F(z^{k^d})$ has a non-polar singularity at $z=R^{k^{-d}}e^{i\vartheta_0 k^{-d}},$ and so by \eqref{F1} it must be the case that the right-hand side of \eqref{F1} has a non-polar singularity at $z=R^{k^{-d}}e^{i\vartheta_0 k^{-d}}.$ Since this cannot be contributed by the rational functions, there is some $j_0\in\{0,\ldots,d-1\}$ such that $F(z^{k^{j_0}})$ has a non-polar singularity at $z=R^{k^{-d}}e^{i\vartheta_0 k^{-d}},$ which in turns implies that $F(z)$ has a non-polar singularity at $z=R^{k^{-d+j_0}}e^{i\vartheta_0 k^{-d+j_0}}.$ Since $R^{k^{-d+j_0}}<R$ and $R$ was chosen minimally we arrive at a contradiction. Thus it must be the case that $R=1$. 

Supposing that $R=1$, we will show that $F(z)$ has infinitely many singularities on the unit circle. To this end suppose that $F(z)$ has a non-polar singularity at $z=e^{i\vartheta_0}$ with $\vartheta_0\in(0,2\pi]$. %If $\vartheta_0=0$, then $z=1$ is a non-polar singularity of $F(z)$, so suppose that $\vartheta_0\in(0,2\pi).$ 
Then $F(z^{k^d})$ has a non-polar singularity at $z=e^{i\vartheta_0{k^{-d}}},$ and so by \eqref{F1} it must be the case that the right-hand side of \eqref{F1} has a non-polar singularity at $z=e^{i\vartheta_0k^{-d}}.$ Since this cannot be contributed by the rational functions, there is some $j_0\in\{0,\ldots,d-1\}$ such that $F(z^{k^{j_0}})$ has a non-polar singularity at $z=e^{i\vartheta_0k^{-d}},$ which in turns implies that $F(z)$ has a non-polar singularity at $z=e^{i\vartheta_0k^{-d+j_0}}.$ Now set $$\vartheta_1:=\vartheta_0k^{-d+j_0},$$ and repeat this process to construct an infinite sequence of distinct non-polar singularities $\{e^{i\vartheta_n}\}_{n\geq 0}$ of $F(z)$ with the property that \begin{equation}\label{limRn}\lim_{n\to\infty} e^{i\vartheta_n}=1.\end{equation} Thus if $F(z)$ has finitely many singularities, it is meromorphic. Note also, we have shown that if $F(z)$ has a non-polar singularity, then it has a non-polar singularity at $z=1$, since the singularity set is closed and the polar singularities are isolated.%; this fact will be used in the next section.

On the other hand, suppose that $F(z)$ is meromorphic and for the sake of a contradiction, suppose that it has infinitely many singularities. We may assume that there is a sequence of these singularities, say $\{R_ne^{i\vartheta_n}\}_{n\geq 0}$, which tend to infinity in modulus, since if not there would be an accumulation point of singularities and that accumulation point would be a non-isolated singularity, which would contradict that $F(z)$ is meromorphic. We will show that under these conditions the function $F(z)$ has infinitely many singularities in a bounded region. To this end, let $L>1$ be fixed, but large enough so that all zeros of the polynomials $a_0(z),\ldots,a_d(z)$ have modulus strictly less than $L$. Let $M\in\mathbb{N}$ be such that $M>1$ and $\alpha=Re^{i\vartheta}$ is a singularity of $F(z)$ with $L^{k^{Md}}\leq R<L^{k^{(M+1)d}}$ and $\vartheta\in(0,2\pi]$. Define the region $$\mathcal{S}(M):=\left\{z\in\mathbb{C}: |z|\in[L,L^{k^{2d}}], \arg z\in\left(0,\frac{2\pi}{k^{(M-1)d}}\right]\right\}.$$ Note that we have assumed without loss of generality that there are infinitely many such $M$ (and $\alpha$) with the above constraints and so to prove this direction of the lemma it is enough to show that there is a singularity of $F(z)$ in $\mathcal{S}(M)$ for such an $M$ as this implies that there are infinitely many singularities in the annulus $|z|\in[L,L^{k^{2d}}]$.

To this end, using \eqref{F1} as above, since $\alpha=Re^{i\vartheta}$ is a singularity of $F(z)$, $R^{k^{-d}}e^{i\vartheta k^{-d}}$ is a singularity of $F(z^{k^d})$ and so for some $j_1\in\{0,\ldots,d-1\}$, $R^{k^{-d+j_1}}e^{i\vartheta k^{-d+j_1}}$ is a singularity of $F(z)$. Continuing in this manner, we have that for some $j_1,\ldots,j_m\in\{0,\ldots,d-1\}$, $R^{k^{-md+j_1+\cdots+j_m}}e^{i\vartheta k^{-md+j_1+\cdots+j_m}}$ is a singularity of $F(z)$ for each $m$. Now let $n$ be such that $nd-(j_1+\cdots+j_n)\in[(M-1)d,Md);$ note that since $md-(j_1+\cdots+j_m)$ grows at least by $1$ and at most by $d$ for each increment in $m$ such an $n$ exists in $[M,Md]$. Set $$\alpha_M:=R^{k^{-nd+j_1+\cdots+j_n}}e^{i\vartheta k^{-nd+j_1+\cdots+j_n}}.$$ Since $L^{k^{Md}} < R<L^{k^{(M+1)d}}$, we have $$L^{k^{Md-nd+j_1+\cdots+j_n}} < R^{k^{-nd+j_1+\cdots+j_n}}<L^{k^{(M+1)d-nd+j_1+\cdots+j_n}},$$ and since $nd-(j_1+\cdots+j_n)\in[(M-1)d,Md)$, we thus have $$L < R^{k^{-nd+j_1+\cdots+j_n}}<L^{k^{(M+1)d-(M-1)d}}=L^{k^{2d}}.$$ Thus $|\alpha_M|\in[L,L^{k^{2d}}]$. Also, $$0<\arg\alpha_M=\frac{\vartheta}{k^{nd-(j_1+\cdots+j_n)}}\leq\frac{2\pi}{k^{(M-1)d}}.$$ Hence $\alpha_M\in\mathcal{S}(M)$. Since $\alpha_M$ is a singularity, we obtain the result.
\end{proof}

\begin{lemma} Let $k\geq 2$ be an integer and $F(z)\in\mathbb{C}[[z]]$ be a $k$-Mahler function. If $F(z)$ is entire, then $F(z)$ is a polynomial.
\end{lemma}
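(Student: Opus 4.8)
The plan is to show that an entire $k$-Mahler function has at most polynomial growth, and then to invoke the standard fact that an entire function of polynomial growth is a polynomial. Write the functional equation in the resolved form \eqref{F1}, namely $F(z^{k^d}) = -\sum_{j=0}^{d-1} (a_j(z)/a_d(z)) F(z^{k^j})$, and let $M(r) := \max_{|z|=r}|F(z)|$ denote the maximum modulus function, which is nondecreasing in $r$ by the maximum modulus principle.

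First I would extract a self-improving growth inequality from \eqref{F1}. For $|z| = r$ with $r$ larger than the modulus of every zero of $a_d$, comparison with leading terms gives a lower bound $|a_d(z)| \ge c\, r^{\deg a_d}$ and upper bounds $|a_j(z)| \le C r^{D}$, where $D := \max_{0\le j\le d}\deg a_j$. Since $z\mapsto z^{k^d}$ maps the circle $|z|=r$ onto the circle $|w|=r^{k^d}$, taking the maximum of both sides of \eqref{F1} over $|z|=r$ and using $|F(z^{k^j})|\le M(r^{k^j})\le M(r^{k^{d-1}})$ yields a bound of the shape $M(r^{k^d}) \le C_1 r^{D} M(r^{k^{d-1}})$ for all large $r$. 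Substituting $\rho = r^{k^{d-1}}$ converts this into the clean recursion
\begin{equation*}
M(\rho^k) \le C_1\, \rho^{E} M(\rho), \qquad E := D/k^{d-1},
\end{equation*}
valid for all $\rho \ge \rho_0$, where $\rho_0$ is a suitable threshold.

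Next I would iterate this recursion along the sequence $\rho_n := \rho_0^{k^n}$. Telescoping the inequality $M(\rho_{n+1}) \le C_1 \rho_n^{E} M(\rho_n)$ produces $\log M(\rho_n) \le \tfrac{E}{k-1}\log\rho_n + O(n)$, and since $n \asymp \log\log\rho_n$ the error term is negligible; thus $M(\rho_n) = O(\rho_n^{E/(k-1)+\varepsilon})$ for every $\varepsilon>0$. Finally, because $M$ is nondecreasing, any large $r$ lies in some interval $[\rho_n,\rho_{n+1}]$ with $\rho_{n+1}=\rho_n^k \le r^k$, so $M(r) \le M(\rho_{n+1}) = O(\rho_{n+1}^{E/(k-1)+\varepsilon}) = O(r^{N})$ for a fixed exponent $N$. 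Hence $|F(z)| = O(|z|^{N})$ as $|z|\to\infty$, and Cauchy's estimate on the Taylor coefficients of $F$ forces every coefficient of index exceeding $N$ to vanish, so $F(z)$ is a polynomial.

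I expect the main obstacle to be organizing the estimates so that the recursion comes out genuinely self-improving; in particular, one must track that the polynomial factor $r^{D}$ contributes only a bounded-exponent power of $\rho$ after the substitution, so that the iterated product of these factors sums to a geometric series of ratio $k$ and leaves a finite limiting exponent $E/(k-1)$. Once the recursion $M(\rho^k)\le C_1\rho^E M(\rho)$ is in hand, the remaining steps (the iteration, filling the gaps between the $\rho_n$ by monotonicity, and the coefficient argument) are routine.
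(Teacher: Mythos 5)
Your proposal is correct and follows essentially the same route as the paper: both resolve the Mahler equation for $F(z^{k^d})$, derive a self-improving maximum-modulus recursion on circles of radius growing like $\rho_0^{k^n}$, deduce polynomial-type growth, and finish with Cauchy estimates. The only cosmetic difference is the last step, where you fill the gaps between the radii $\rho_n$ by monotonicity of $M(r)$ and then kill the high Taylor coefficients, while the paper instead applies the Cauchy integral formula to a high-order derivative of $F$ directly on the discrete family of circles; both conclusions are routine once the growth bound is in hand.
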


\begin{proof} Let $k\geq 2$ be an integer and $F(z)\in\mathbb{C}[[z]]$ be an entire $k$-Mahler function satisfying $$\sum_{j=0}^{d} a_j(z)F(z^{k^j})=0,$$ for $a_j(z)\in\mathbb{C}[z]$ with $a_0(z)a_d(z)\neq 0$. As in the proof of Lemma \ref{merofinite}, write \begin{equation}\label{F2} F(z^{k^d})=-\sum_{j=0}^{d-1} \frac{a_j(z)}{a_d(z)}F(z^{k^j}).\end{equation} 

Pick $L>1$ such that all of the zeros of $a_d(z)$ are in the open disk $B(0,L)$ of radius $L$ centered at the origin. Notice that since the $a_i(z)$ are polynomials, there is an $N>1$ and a constant $C>1$ such that for $|z|\geq L$, we have \begin{equation}\label{aiadbound} \max_{0\leq i\leq d-1}\left\{\left|\frac{a_i(z)}{a_d(z)}\right|\right\}<C|z|^N;\end{equation} in particular, the value $N=\max_{0\leq i\leq d-1}\{\deg a_i(z), 2\}$ is sufficient.

For $\ell\geq 0$ denote $$M_\ell:=\max\left\{|F(z)|:|z|=L^{k^\ell}\right\},$$ where $L$ is as chosen above. Using \eqref{F2}, \eqref{aiadbound}, and the Maximum Modulus Theorem, we have for $j\geq d$ that $$M_j\leq d \cdot C\big(L^{k^{j-d}}\big)^N M_{j-1}\leq C d L^{Nk^{j}}M_{j-1}.$$ Thus recursively, we have for each $n\geq d$ that $$M_n\leq M_{d-1}(C d)^n L^{Nk^{n+1}}.$$ But since $L>1$, this implies that there is some constant $b>0$ such that for $n\geq d$ we have $$M_n\leq L^{bk^n}.$$

Now let $m\geq b+2$ be a natural number, fix an $\alpha\in\mathbb{C}$ and consider $$F^{(m-1)}(\alpha)=\frac{1}{2\pi i}\int_{\gamma_n}\frac{F(z)}{(z-\alpha)^m}dz,$$ where $\gamma_n$ is the circle of radius $L^{k^n}$ with $n$ large enough so that $\alpha$ is inside the circle of radius $L^{k^n}/2$ centered at the origin. Then for all $z$ on $\gamma_n$ we have that $$\frac{|z|}{2}\leq |z-\alpha|.$$ Thus for $n$ large enough, we have $$|F^{(m-1)}(\alpha)|\leq \frac{1}{2\pi} \cdot 2\pi L^{k^n}\cdot \frac{2^mM_n}{(L^{k^n})^m}= \frac{2^mM_n}{(L^{k^n})^{m-1}}\leq 2^mL^{k^n(b-m+1)}.$$ Recall that $m\geq b+2$ so that the above gives that $$|F^{(m-1)}(\alpha)|\leq \frac{2^m}{L^{k^n}}.$$ Since $n$ can be taken arbitrarily large, we have that $F^{(m-1)}(\alpha)=0$. But $\alpha\in\mathbb{C}$ was arbitrary, and so $F^{(m-1)}(z)$ is identically zero; hence $F(z)$ is a polynomial.
\end{proof}

\begin{theorem}\label{main1} Let $k\geq 2$ be an integer and $F(z)\in\mathbb{C}[[z]]$ be a $k$-Mahler function. If $F(z)$ has only finitely many singularities, then $F(z)$ is a rational function.
\end{theorem}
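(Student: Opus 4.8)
The plan is to reduce to the entire case already settled by the lemma immediately preceding this theorem. First I would invoke Lemma~\ref{merofinite}: since $F(z)$ has only finitely many singularities, it is meromorphic on the plane, so its only singularities are poles, and there are finitely many of them, say at the points $\beta_1,\ldots,\beta_s$ with respective multiplicities $m_1,\ldots,m_s$. Setting $p(z):=\prod_{i=1}^{s}(z-\beta_i)^{m_i}$, the product $G(z):=p(z)F(z)$ is holomorphic at every point of $\mathbb{C}$ and is therefore entire (note that $F(z)$ has no pole at the origin, being a power series, so no factor is needed there).

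The key step is to verify that $G(z)$ is again a $k$-Mahler function. Starting from $\sum_{j=0}^{d} a_j(z)F(z^{k^j})=0$ and substituting $F(z^{k^j})=G(z^{k^j})/p(z^{k^j})$, I would clear denominators by multiplying through by $\prod_{j=0}^{d}p(z^{k^j})$ to obtain
$$\sum_{j=0}^{d} a_j(z)\Big(\prod_{i\neq j}p(z^{k^i})\Big)G(z^{k^j})=0.$$
Each coefficient $a_j(z)\prod_{i\neq j}p(z^{k^i})$ is a polynomial in $z$, and since $a_0(z)a_d(z)\neq 0$ and $p(z)\neq 0$, the coefficient of $G(z)$ and the coefficient of $G(z^{k^d})$ are both nonzero polynomials. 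Hence this is a genuine Mahler-type functional equation, and $G(z)$ is $k$-Mahler.

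Finally I would apply the preceding lemma, which asserts that an entire $k$-Mahler function is a polynomial. It follows that $G(z)$ is a polynomial, and therefore $F(z)=G(z)/p(z)$ is a rational function, as claimed.

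I expect the only nonroutine point to be the second step, namely confirming that clearing the poles of $F(z)$ preserves the Mahler property and, in particular, that the extreme coefficients of the resulting functional equation do not vanish; once the substitution is carried out this is a direct check. The first and last steps are then immediate consequences of Lemma~\ref{merofinite} and of the lemma on entire Mahler functions, respectively.
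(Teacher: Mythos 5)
Your proposal is correct and follows essentially the same route as the paper: use Lemma~\ref{merofinite} to conclude the finitely many singularities are poles, clear them with a polynomial $p(z)$, check that $p(z)F(z)$ satisfies a Mahler-type equation with nonvanishing extreme coefficients by multiplying through by $\prod_{i} p(z^{k^i})$, and apply the lemma that entire Mahler functions are polynomials. Your only deviation is cosmetic: you make explicit the appeal to Lemma~\ref{merofinite} that the paper leaves implicit when asserting the existence of the clearing polynomial.
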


\begin{proof} Let $k\geq 2$ be an integer and $F(z)\in\mathbb{C}[[z]]$ be a $k$-Mahler function satisfying \begin{equation}\label{F3} \sum_{j=0}^{d} a_j(z)F(z^{k^j})=0,\end{equation} for $a_j(z)\in\mathbb{C}[z]$ with $a_0(z)a_d(z)\neq 0$. If $F(z)$ has only finitely many singularities, then there is a non-zero polynomial $q(z)\in\mathbb{C}[z]$ such that $q(z)F(z)$ is entire. For $j\in\{0,\ldots,d-1\}$ set $$q_j(z):=\frac{1}{q(z^{k^j})}\prod_{i=0}^d q(z^{k^i})\in\mathbb{C}[z].$$ Multiplying \eqref{F3} by $\prod_{i=0}^d q(z^{k^i})\in\mathbb{C}[z]$ we then have that $$\sum_{j=0}^{d} a_j(z)q_j(z)q(z^{k^j})F(z^{k^j})=0,$$ where since $q(z)$ is not identically zero we have that $a_0(z)q_0(z)a_d(z)q_d(z)\neq 0.$ Hence $q(z)F(z)$ is an entire $k$-Mahler function and thus, by the preceding lemma, a polynomial. This proves that $F(z)$ is a rational function.
\end{proof}

\begin{proof}[Proof of Theorem \ref{Df}] A $D$-finite series has finitely many singularities; namely, if $F(z)$ satisfies $p_0(z) F(z) + p_1(z) F'(z) + \cdots + p_m(z) F^{(m)}(z) = 0$ then each singularity of $F(z)$ is a zero of $p_m(z)$.
An application of Theorem \ref{main1} provides the desired result.
\end{proof}

The following corollary is a result of Nishioka \cite[Theorem~5.1.7]{N1996}.

\begin{corollary}[Nishioka \cite{N1996}] Let $k\geq 2$ be an integer and $F(z)\in\mathbb{C}[[z]]$ be a $k$-Mahler function. If $F(z)$ is algebraic, then $F(z)$ is a rational function.
\end{corollary}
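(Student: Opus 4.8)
The plan is to reduce the corollary to one of the two rationality results already established, using the classical fact that an algebraic power series is $D$-finite. Suppose $F(z)$ is algebraic over $\mathbb{C}(z)$, so that $P(z,F(z))=0$ for some nonzero $P(z,y)\in\mathbb{C}[z][y]$, which we may take to be irreducible. Then every derivative $F,F',F'',\dots$ lies in the field $\mathbb{C}(z)(F)$, which is finite-dimensional over $\mathbb{C}(z)$ of dimension $\deg_y P$; indeed $F'=-P_z(z,F)/P_y(z,F)$ already belongs to this field, and the same holds inductively for the higher derivatives. Consequently $F,F',\dots,F^{(\deg_y P)}$ are linearly dependent over $\mathbb{C}(z)$, and clearing denominators produces a homogeneous linear differential equation with polynomial coefficients satisfied by $F$. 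Thus $F$ is $D$-finite, and Theorem \ref{Df} immediately yields that $F$ is rational.

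I would also record a more self-contained alternative that routes through Theorem \ref{main1} instead of Theorem \ref{Df}: rather than producing a differential equation, one checks directly that an algebraic function has only finitely many singularities. Writing $P(z,y)=\sum_{i=0}^n c_i(z)y^i$ with $c_n(z)\not\equiv 0$, the implicit function theorem permits analytic continuation of the branch of $F$ determined near the origin (where $F$ converges by the first lemma) across every point at which neither the leading coefficient $c_n(z)$ nor the discriminant of $P$ with respect to $y$ vanishes. The only possible singularities, namely poles and algebraic branch points, are therefore confined to this finite zero set, so $F$ has finitely many singularities and Theorem \ref{main1} applies.

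There is no genuinely hard step here: the substance of the corollary is carried entirely by Theorems \ref{main1} and \ref{Df}, and the only point requiring (routine) justification is the passage from \emph{algebraic} to either \emph{$D$-finite} or \emph{finitely many singularities}, both standard facts about algebraic functions. I would present whichever reduction fits the exposition most smoothly, most likely the one-line appeal to Theorem \ref{Df}.
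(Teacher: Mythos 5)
Your primary argument coincides with the paper's proof: the paper disposes of the corollary in one line by citing Stanley's theorem that an algebraic series is $D$-finite and then applying Theorem \ref{Df}, and your first reduction is exactly this, merely with the standard proof of Stanley's fact (derivatives lie in the finite-dimensional extension $\mathbb{C}(z)(F)$) written out rather than cited. Your alternative route through Theorem \ref{main1} --- observing that an algebraic function's singularities are confined to the finitely many zeros of the leading coefficient and the discriminant --- is also correct and bypasses $D$-finiteness entirely, but it is not the path the paper takes.
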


\begin{proof} An algebraic series is $D$-finite; see Stanley \cite[Theorem~2.1]{S1980}.
\end{proof}

%%%%%%%%%%%%%%%%%%%%%%%%%%%%%%%%%%%%%%%%%%%%%%%%%%%%%%%%%%%%%%%%%%%%%%%%%%%%%
\section{A P\'olya--Carlson type result}
%%%%%%%%%%%%%%%%%%%%%%%%%%%%%%%%%%%%%%%%%%%%%%%%%%%%%%%%%%%%%%%%%%%%%%%%%%%%%

%\begin{definition} We define the {\em Cartier operators} $\gL_i:\mathbb{C}((x))\to \mathbb{C}((x))$ by $$\gL_i\left(\sum_{n\geq -j}c_nx^n\right)=\sum_{n\geq (-j-i)/k} c_{kn+i}x^n$$ for $i=0,\ldots,k-1$.
%\end{definition}

%Note that for any $G(z),H(z)\in\mathbb{C}[[z]]$ we have that $\gL_i(G(z)H(z^k))=\gL_i(G(z))H(z)$ for each $i=0,\ldots,k-1$ (see Becker \cite[Lemma 2]{B1994}).

\begin{proof}[Proof of Theorem \ref{PC}] Suppose that $F(z)\in\mathbb{C}[[z]]$ is $k$-Mahler and not rational. By the structure theorem, we have that $F(z)$ has only polar singularities in the unit disk. Note that we have already shown that if $F(z)$ is meromorphic, then it is a rational function, so we may suppose that there is some non-polar singularity of $F(z)$. By Lemma \ref{merofinite} we have that there are infinitely many non-polar singularities of $F(z)$ on the unit circle. 

Let $S$ be the closure of the non-polar singularities of $F(z)$ on the unit circle. We will show that $S$ is the entire unit circle. Towards a contradiction, suppose there are points $\beta,\gamma\in S$ with $\arg\beta<\arg\gamma$ such that the points on the small arc of the unit circle strictly between $\beta$ and $\gamma$ are not singularities of $F(z)$.

Let \begin{equation}\label{pFd}\sum_{i=0}^d p_i(z)F(z^{k^i})=0\end{equation} be a non-trivial Mahler functional equation for $F(z)$ which is minimal with respect to $d$, and define the vector space $$V:=\sum_{i\geq 0}\mathbb{C}(z)F(z^{k^i}).$$ 

It is quite easy to see that $V$ has dimension $d$ as a $\mathbb{C}(z)$-vector space since $d$ is minimal with respect to the relation \eqref{pFd}. Indeed, suppose that $$\sum_{i=0}^n q_i(z)F(z^{k^i})\in V$$ with $q_n(z)\neq 0$ and $n\geq d$. Then multiplying by $1$ and subtracting zero using \eqref{pFd}, we have that \begin{align*} \sum_{i=0}^n q_i(z)&F(z^{k^i}) = \sum_{i=0}^{n-1}q_i(z)F(z^{k^i})\\
&+\frac{1}{p_d(z^{k^{n-d}})}\left(p_d(z^{k^{n-d}})q_n(z)F(z^{k^n})-q_n(z)\sum_{i=0}^d p_i(z^{k^{n-d}})F(z^{k^{n-d+i}})\right)\\
&\quad\qquad = \sum_{i=0}^{n-1}q_i(z)F(z^{k^i})-\frac{1}{p_d(z^{k^{n-d}})}\left(q_n(z)\sum_{i=0}^{d-1} p_i(z^{k^{n-d}})F(z^{k^{n-d+i}})\right)\\
&\quad\qquad \in\sum_{i=0}^{n-1}\mathbb{C}(z)F(z^{k^{i}}).
\end{align*} Continuing in this manner shows that $$\sum_{i=0}^n q_i(z)F(z^{k^i})\in\sum_{i=0}^{d-1}\mathbb{C}(z)F(z^{k^{i}}).$$ Thus $\dim_{\mathbb{C}(z)}V\leq d.$ Since $d$ was chosen minimally so that \eqref{pFd} holds, we have that $\dim_{\mathbb{C}(z)}V=d$.

Similarly, for all integers $L>0$, the $\mathbb{C}(z)$-vector space $$\sum_{i=0}^{d-1}\mathbb{C}(z)F(z^{k^{i+L}})$$ is $d$-dimensional, and since it is a subspace of $V$ it is equal to $V$. 

Thus since $F(z)\in V$, there are then polynomials $q_{0,L}(z),\ldots,$ $q_{d,L}(z)$ with $q_{0,L}(z)$ nonzero such that \begin{equation}\label{qFL} q_{0,L}(z)F(z)=\sum_{i=1}^d q_{i,L}(z)F(z^{k^{i+L-1}}).\end{equation}

Since $S$ is the closure of the non-polar singularities, and polar singularities are isolated, we can pick a non-polar singularity $\alpha$ of $F(z)$ as close as we wish to $\beta$. Pick such an $\alpha$ and an $L>0$ big enough so that $\alpha\omega^j$ is in the arc between $\beta$ and $\gamma$ for $j=1,\ldots,d+1$ where $\omega:=e^{2\pi i/k^L}.$ Notice that $q_{0,L}(z)F(z)$ has a non-polar singularity at $z=\alpha$ since $q_{0,L}(z)$ is nonzero.

Define \begin{multline*}W:=\Big\{(s_1(z),\ldots,s_d(z))\in\mathbb{C}(z)^d:\\ \sum_{i=1}^d s_i(z)F(z^{k^{i+L-1}})\mbox{ has at most a polar singularity at $z=\alpha$}\Big\}.\end{multline*} Note that $W$ is a $\mathbb{C}(z)$-vector space. Then sending $z\mapsto \omega^jz$ in \eqref{qFL} for $j=1,\ldots,d+1$, we have
\begin{equation}\label{qFeq}
	q_{0,L}(\omega^jz)F(\omega^jz)=\sum_{i=1}^d q_{i,L}(\omega^jz)F((\omega^jz)^{k^{i+L-1}}).
\end{equation}
By construction, the left-hand side of \eqref{qFeq} has no non-polar singularity at $z=\alpha$.
Since $\omega$ is a $k^L$th root of unity, $F((\omega^jz)^{k^{i+L-1}}) = F(z^{k^{i+L-1}})$ for all $j$ and all $i\geq 1$, and thus $$\mathbf{u}_j(z):=\left(q_{1,L}(\omega^jz),\ldots,q_{d,L}(\omega^jz)\right)\in W$$ for $j=1,\ldots,d+1$.
Since $\dim_{\mathbb{C}(z)}W<d+1$, there is some $t\in\{1,\ldots,d+1\}$ such that $$\mathbf{u}_t(z)=g_1(z)\mathbf{u}_1(z)+\cdots+g_{t-1}(z)\mathbf{u}_{t-1}(z).$$ In other words, we have that \begin{equation}\label{star} q_{i,L}(\omega^tz)=g_1(z)q_{i,L}(\omega z)+\cdots+g_{t-1}(z)q_{i,L}(\omega^{t-1}z)\end{equation} for each $i=1,\ldots,d$. 

Replacing $z$ with $\omega z$, using \eqref{star} gives that $$\mathbf{u}_{t+1}(z):=\left(q_{1,L}(\omega^{t+1}z),\ldots,q_{d,L}(\omega^{t+1}z)\right)\in W.$$ Continuing, we get that $\left(q_{1,L}(\omega^{k^L}z),\ldots,q_{d,L}(\omega^{k^L}z)\right)\in W.$ Since $\omega^{k^L}=1$, we have that $$\left(q_{1,L}(z),\ldots,q_{d,L}(z)\right)\in W,$$ and thus \eqref{qFL} gives that $q_{0,L}(z)F(z)$ is non-singular at $z=\alpha$, a contradiction. This proves the theorem.
\end{proof}

%%%%%%%%%%%%%%%%%%%%%%%%%%%%%%%%%%%%%%%%%%%%%%%%%%%%%%%%%%%%%%%%%%%%%%%%%%%%%
\section{Acknowledgements}
%%%%%%%%%%%%%%%%%%%%%%%%%%%%%%%%%%%%%%%%%%%%%%%%%%%%%%%%%%%%%%%%%%%%%%%%%%%%%

We thank Jean-Paul B\'ezivin for pointing out his paper to us and the referee for a careful reading.
The research of J.~P.~Bell was supported by NSERC grant 31-611456 and the research of M.~Coons was supported in part by a Fields--Ontario Fellowship and NSERC.

%%%%%%%%%%%%%%%%%%%%%%%%%%%%%%%%%%%%%%%%%%%%%%%%%%%%%%%%%%%%%%%%%%%%%%%%%%%%%
%%%%%%%%%%%%%%%%%%%%%%%%%%%%%%%%%%%%%%%%%%%%%%%%%%%%%%%%%%%%%%%%%%%%%%%%%%%%%


\begin{thebibliography}{10}

\bibitem{AS1992}
J.-P. Allouche and J.~O. Shallit, The ring of {$k$}-regular
  sequences, \textit{Theoret. Comput. Sci.} \textbf{98} (1992), 163--197.

\bibitem{B1994}
P.-G. Becker, {$k$}-regular power series and Mahler-type functional
  equations, \textit{J. Number Theory} \textbf{49} (1994), 269--286.

\bibitem{Bezivin}
J.-P. B\'ezivin,
Sur une classe d'equations fonctionnelles non lin\'eaires,
\textit{Funkcialaj Ekvacioj} \textbf{37} (1994), 263--271.

\bibitem{D1993}
P.~Dumas, \textit{R\'ecurrences Mahl\'eriennes, Suites Automatiques, \'Etudes
  Asymptotiques}, Institut National de Recherche en Informatique et en
  Automatique (INRIA), Rocquencourt, 1993, Th{\`e}se, Universit{\'e} de
  Bordeaux I, Talence, 1993.

\bibitem{LP1988}
J.~H. Loxton and A.~J. van~der Poorten, Arithmetic properties of
  automata: regular sequences, \textit{J. Reine Angew. Math.} \textbf{392} (1988), 57--69.

\bibitem{M1929}
K.~Mahler, Arithmetische {E}igenschaften der {L}\"osungen einer {K}lasse
  von {F}unktionalgleichungen, \textit{Math. Ann.} \textbf{101} (1929), 342--366.

\bibitem{M1930a}
K.~Mahler, Arithmetische {E}igenschaften einer {K}lasse
  transzendental-transzendenter {F}unktionen, \textit{Math. Z.} \textbf{32} (1930), 545--585.

\bibitem{M1930b}
K.~Mahler, \"{U}ber das {V}erschwinden von {P}otenzreihen mehrerer
  {V}er\"anderlichen in speziellen {P}unktfolgen, \textit{Math. Ann.} \textbf{103}
  (1930), 573--587.

\bibitem{N1996}
Ku. Nishioka, \textit{Mahler Functions and Transcendence}, Lecture Notes in
  Mathematics, Vol. 1631, Springer-Verlag, 1996.

\bibitem{R1992}
B.~Rand\'e, \textit{\'Equations Fonctionnelles de Mahler et Applications aux
  Suites $p$-r\'eguli\`eres}, Institut National de Recherche en Informatique et
  en Automatique (INRIA), Rocquencourt, 1992, Th{\`e}se, Universit{\'e} de
  Bordeaux I, Talence, 1992.

\bibitem{S1980}
R.~P. Stanley, Differentiably finite power series, \textit{European J. Combin.}
  \textbf{1} (1980), 175--188.

\end{thebibliography}
\end{document}